\newcommand{\e}{\varepsilon}
\newcommand{\C}{\mathds{C}}
\newcommand{\q}{\quad}
\newtheorem{thm}{Theorem}[section]
\newtheorem{lem}[thm]{Lemma}
\newtheorem{kor}[thm]{Corollary}
\newtheorem{prop}[thm]{Proposition}
\theoremstyle{definition}
\theoremstyle{remark}
\newtheorem*{rema}{Remark}
\title{On Robin's inequality}
\author{Christian Axler}
\email{christian.axler@hhu.de}
\subjclass[2010]{Primary 11A25; Secondary 11N56}
\keywords{Riemann hypothesis, Robin's inequality, sum of divisor function}
\date{\today}
\begin{document}

\begin{abstract}
Let $\sigma(n)$ denotes the sum of divisors function of a positive integer $n$. Robin proved that the Riemann hypothesis is true if and only if the inequality $\sigma(n) < e^{\gamma}n \log \log n$ holds for every positive integer $n \geq 5041$, where $\gamma$ is the Euler-Mascheroni constant. In this paper we establish a new family of integers for which Robin's inequality $\sigma(n) < e^{\gamma}n \log \log n$ hold. Further, we establish a new unconditional upper bound for the sum of divisors function. For this purpose, we use an approximation for Chebyshev's $\vartheta$-function and for some product defined over prime numbers.
\end{abstract}

\maketitle
\section{Introduction}
Let $n$ be a positive integer. The arithmetical function $\sigma$ is defined by
\begin{displaymath}
\sigma(n) = \sum_{d \vert n} d
\end{displaymath}
and denotes the sum of the divisors of $n$. Gronwall \cite[p. 119]{gronwall1913} found the maximal order of $\sigma$ by showing that
\begin{equation}
\limsup_{n \to \infty} \frac{\sigma(n)}{n \log \log n} = e^{\gamma}, \tag{1.1} \label{1.1}
\end{equation}
where $\gamma = 0.5772156\ldots$ denotes the Euler-Mascheroni constant. In the proof of \eqref{1.1}, Gronwall used the asymptotic formula
\begin{equation}
\prod_{p \leq x} \frac{p}{p-1} \sim e^{\gamma} \log x \q\q (x \to \infty), \tag{1.2} \label{1.2}
\end{equation}
where $p$ runs over primes not exceeding $x$, which is due to Mertens \cite[p. 53]{mertens1874}. Under the assumption that the Riemann hypothesis is true, Ramanujan 
\cite{ramanujan1997} showed that the inequality
\begin{displaymath}
\frac{\sigma(n)}{n} < e^{\gamma} \log \log n
\end{displaymath}
holds for all sufficiently large positive integers $n$. Robin \cite[Th\'eor\`eme 1]{robin1984} improved Ramanujan's result by showing the following equivalence.

\begin{prop}[Robin] \label{prop101}
The Riemann hypothesis is true \textit{if and only if}
\begin{equation}
\frac{\sigma(n)}{n} < e^{\gamma} \log \log n \q \q (n \geq 5041). \tag{1.3} \label{1.3}
\end{equation}
\end{prop}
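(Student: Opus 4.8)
The plan is to split the equivalence into its two implications, but first to reduce both to a thin set of extremal integers. The record values of $\sigma(n)/n$ occur exactly at the \emph{superabundant} numbers, those $N$ with $\sigma(m)/m < \sigma(N)/N$ for all $m < N$. For an arbitrary $n$, if $N \leq n$ is the largest superabundant number not exceeding $n$, then $\sigma(n)/n \leq \sigma(N)/N$ while $\log\log n \geq \log\log N$, so
\begin{displaymath}
\frac{\sigma(n)}{n\log\log n} \leq \frac{\sigma(N)}{N\log\log N}.
\end{displaymath}
Thus it is enough to establish \eqref{1.3} for superabundant $N \geq 5041$, the finitely many smaller values being checked directly.

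For such an $N$ I would pass from the multiplicative inequality to an additive statement about primes. Writing $N = \prod_{p\leq x} p^{a_p}$ with largest prime factor $x$ and exponents non-increasing in $p$, one has
\begin{displaymath}
\log\frac{\sigma(N)}{N} = \sum_{p\leq x}\log\frac{p}{p-1} + r_1(N), \qquad \log N = \vartheta(x) + r_2(N),
\end{displaymath}
where $r_1,r_2$ are small corrections from the higher prime powers. By Mertens' formula \eqref{1.2} both $\sigma(N)/N$ and $e^\gamma\log\log N$ are then asymptotic to $e^\gamma\log x$, so the theorem turns on the \emph{finer} terms. Coupling an effective form of \eqref{1.2} with $\log N \approx \vartheta(x)$ and expanding $\log\log N = \log\vartheta(x) \approx \log x + (\vartheta(x)-x)/x$, I would extract a relation of the shape
\begin{displaymath}
\log\frac{\sigma(N)}{N} - \log\bigl(e^\gamma\log\log N\bigr) = c\,\frac{\vartheta(x)-x}{x\log x} + (\text{lower order}),
\end{displaymath}
in which $\vartheta(x)-x$ (equivalently $\psi(x)-x$) enters linearly; the sign of the left-hand side is governed by that of this prime-counting error.

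Now the two directions use opposite inputs on $\vartheta(x)-x$. For the implication RH $\Rightarrow$ \eqref{1.3} I would insert Schoenfeld's conditional bound $\vartheta(x)-x \ll \sqrt{x}\,(\log x)^2$, which makes the displayed difference negative once $x$ exceeds an explicit threshold; the residual range $5041 \leq N \leq X_0$ is then cleared by computation. For the converse I would argue contrapositively: if RH fails, set $\Theta = \sup\{\Re\rho : \zeta(\rho)=0\} > \tfrac12$, invoke the $\Omega$-theorem $\vartheta(x)-x = \Omega_{\pm}(x^{\Theta-\e})$, and feed a suitable positive excursion into a carefully chosen sequence of colossally abundant $N$ to force $\sigma(N)/N > e^\gamma\log\log N$ infinitely often, contradicting \eqref{1.3}.

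The main obstacle will be this converse implication. The forward direction is, after the reduction, essentially a single effective estimate; but deducing the \emph{failure} of Robin's inequality from the failure of RH requires transporting an $\Omega_{\pm}$ oscillation of $\vartheta(x)-x$ of size $x^{\Theta-\e}$ into the discrete quantity $\sigma(N)/N$ along an explicit family of extremal integers, with all secondary errors $r_1,r_2$ and the lower-order terms controlled uniformly and with the correct sign. Making sure a genuine positive excursion survives this transfer and dominates the remainder is the delicate heart of the argument.
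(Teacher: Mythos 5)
First, a point of order: the paper does not prove Proposition \ref{prop101} at all --- it is Robin's theorem, quoted with a citation to \cite[Th\'eor\`eme 1]{robin1984}. So your proposal can only be measured against Robin's original argument, which, at the strategic level, it does resemble: reduction to extremal numbers, an additive decomposition via Mertens' formula \eqref{1.2} and Chebyshev's $\vartheta$, a conditional estimate for the forward direction, and an oscillation argument for the converse.

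The decisive analytic step, however, is set up in a way that would fail. Your central display, in which $\log(\sigma(N)/N)-\log(e^{\gamma}\log\log N)$ is claimed to contain $(\vartheta(x)-x)/(x\log x)$ linearly with a nonzero coefficient $c$, is not correct: the term $(\vartheta(x)-x)/(x\log x)$ produced by $\log\log N\approx\log\vartheta(x)$ cancels \emph{exactly} against the identical term in the partial-summation expansion $\sum_{p\le x}1/p=\log\log x+M+(\vartheta(x)-x)/(x\log x)-\int_x^\infty(\vartheta(t)-t)(1+\log t)\,(t^2\log^2 t)^{-1}dt$. What survives is the \emph{integrated} prime-counting error, in effect $\sum_\rho x^{\rho-1}/(\rho(1-\rho)\log x)$, together with structural prime-power terms of order $1/(\sqrt{x}\log x)$ --- the source of Ramanujan's constant $-2(\sqrt{2}-1)$ quoted in the paper's remark after Theorem \ref{thm103}. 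Consequently, inserting Schoenfeld's pointwise bound $\vartheta(x)-x\ll\sqrt{x}\log^2 x$, as you propose for the forward direction, cannot close the argument even in principle: it yields an error of order $(\log x)/\sqrt{x}$, which exceeds the structural negative term $\approx 2(\sqrt{2}-1)/(\sqrt{x}\log x)$ by a factor of $(\log x)^2$, no matter how large $x$ is. The implication RH $\Rightarrow$ \eqref{1.3} goes through only via the explicit formula, where under RH the zero sum is bounded by $\sum_\rho 1/|\rho(1-\rho)|=2+\gamma-\log(4\pi)\approx 0.046$, and the whole forward direction rests on the numerical comparison $0.046<2(\sqrt{2}-1)\approx 0.83$ --- which is entirely absent from your sketch. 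The converse needs the matching correction: since $\vartheta(x)-x$ itself has cancelled, one must apply Landau's oscillation theorem to the integrated zero sum (as Nicolas does, and Robin invokes), not a raw $\Omega_{\pm}$ result for $\vartheta(x)-x$. A smaller wrinkle: with superabundant rather than colossally abundant numbers, every $n$ with $5041\le n<10080$ has $5040$ as its largest superabundant predecessor, and \eqref{1.3} fails at $5040$, so that entire range --- not just $n\le 5040$ --- must be verified directly.
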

 
This criterion on the Riemann hypothesis is called \textit{Robin's criterion} and the inequality \eqref{1.3} is called \textit{Robin's inequality}. Robin's inequality is proved to hold in many cases (see, for instance, Banks et al. \cite{banks2009}, Briggs \cite{briggs2006}, Grytczuk \cite{grytczuk2007}, and Grytczuk \cite{grytczuk2010}), but remains open in general. In particular, Robin’s inequality has been
proven for several the $t$-free integers. Here a positive integer $n$ is called $t$-free if $n$ is not divisible by the $t$th power of any prime number, and $t$-full otherwise. Choie et al. \cite[Theorem 1.5]{choie2007} showed that the inequality \eqref{1.3} does hold for every 5-free integer $n$ with $n \geq 5041$. This result was extended by Sol\'e and Planat \cite[Theorem 10]{sole2012} by showing that the inequality \eqref{1.3} is true for every 7-free integer $n$. Broughan and Trudgian \cite[Theorem 1]{trudgian2015} used the method investigated by Sol\'e and Planat to find that Robin's inequality holds for every 11-free integer $n$ with $n \geq 5041$. Recently, Morrill and Platt \cite[Theorem 2]{morrillplatt} used the same method combined with an enlargement of Briggs' \cite{briggs2006} result to show that the inequality \eqref{1.3} holds for every 20-free integer $n$ with $n \geq 5041$. In this paper, we apply recently improved estimates for the product in \eqref{1.2} to Sol\'e and Planat's method to obtain the following result.

\begin{thm} \label{thm102}
Robin’s inequality \eqref{1.3} holds for every 21-free integer $n$ with $n \geq 5041$.
\end{thm}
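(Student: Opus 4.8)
The plan is to follow the Solé–Planat method, which reduces Robin's inequality for $t$-free integers to a bound on a product over primes. The starting observation is that the function $\sigma(n)/n = \prod_{p^a \| n} (1 + p^{-1} + \cdots + p^{-a})$ is multiplicative, and for a $t$-free integer each exponent satisfies $a \leq t-1$, so $1 + p^{-1} + \cdots + p^{-a} < (1 - p^{-1})^{-1}(1 - p^{-t})$. Hence for any $t$-free $n$ one has
\begin{equation}
\frac{\sigma(n)}{n} < \prod_{p \mid n} \frac{1 - p^{-t}}{1 - p^{-1}} \leq \prod_{p} \frac{1 - p^{-t}}{1 - p^{-1}} \cdot \text{(truncation to primes dividing }n). \tag{*} \label{star}
\end{equation}
The key reduction is that, after extracting the divergent part $\prod_{p \mid n}(1 - p^{-1})^{-1}$, which by Mertens' estimate \eqref{1.2} is governed by $e^{\gamma}\log\log n$, the remaining factor $\prod_p (1 - p^{-t})$ is a convergent product whose value can be controlled. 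First I would make \eqref{star} precise by separating the two regimes: for the ``small'' primes $p \leq x(n)$ (where $x(n)$ is chosen so that the first $k = \pi(x)$ primes have product at most $n$, the worst case being $n$ a primorial), and for the tail.

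The second step is to insert the explicit, recently improved estimates for $\prod_{p \leq x} p/(p-1)$ alluded to in the abstract. Rather than using the bare asymptotic \eqref{1.2}, I would use an effective two-sided bound of the shape $\prod_{p \leq x} p/(p-1) < e^{\gamma}(\log x)(1 + c/\log^2 x)$ valid for $x$ beyond some explicit threshold, together with a companion estimate for Chebyshev's $\vartheta(x) = \sum_{p \leq x}\log p$, so that $\log x$ can be replaced by something controlled by $\log n$ through the inequality $\vartheta(x) \leq \log n$. Combining these with the convergent factor $\prod_p(1 - p^{-t})$, whose numerical value for $t = 21$ is extremely close to its $t = \infty$ limit, the target reduces to an explicit inequality in $x$ (equivalently in $\log n$) of the form $(\log x)\prod_p(1-p^{-t}) \cdot (\text{error factors}) < \log\log n$ for all $x$ exceeding a threshold.

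The third step is to verify this reduced inequality holds once $n$ (hence $x$) is large enough, say for $n \geq n_0$. Because the correction factor $\prod_p(1 - p^{-21})$ differs from the $t=7$ or $t=11$ cases only in a minuscule tail, the whole gain over Broughan–Trudgian and Morrill–Platt comes from sharper constants in the effective Mertens and $\vartheta$ estimates; the analysis is structurally identical, only the numerics shift. Finally I would close the gap below $n_0$ by a finite check: every $t$-free integer $n$ with $5041 \leq n < n_0$ must be verified directly, which in practice is handled by noting that the extremal candidates are the superabundant or colossally abundant numbers, so only a short finite list needs to be tested against \eqref{1.3}.

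The main obstacle I anticipate is the interplay in the second step between the effective bound on $\prod_{p\leq x} p/(p-1)$ and the passage from $x$ to $n$ via $\vartheta(x) \leq \log n$: the error terms in the Mertens-type estimate and in the prime-counting estimate must be propagated simultaneously and shown to be dominated by the convergent-product deficit $1 - \prod_p(1 - p^{-21})$, which is genuinely tiny. Getting the threshold $n_0$ small enough that the residual finite verification is computationally feasible — while keeping all constants rigorous and explicit — is where the real care is required.
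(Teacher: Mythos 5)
Your strategy coincides with the paper's own proof: the bound $\sigma(n)/n \leq \prod_{p\mid n}(1-p^{-t})/(1-p^{-1}) = \Psi_t(n)/n$ for $t$-free $n$ is Lemma \ref{lem201} (Sol\'e--Planat); the reduction to the worst case of primorials is Lemma \ref{lem202}; the effective Mertens and $\vartheta$ estimates are the paper's inputs from \cite{axlernew} and \cite{kadiri}; and the finite range is disposed of by the colossally abundant computations of Briggs and Morrill--Platt (Lemma \ref{lem203}), exactly as you indicate. (Your display $(\ast)$ is garbled as written, since $\prod_{p}(1-p^{-t})/(1-p^{-1})$ over \emph{all} primes diverges, but the intended reduction --- replace the primes dividing $n$ by the first $\omega(n)$ primes, using that $1+p^{-1}+\cdots+p^{-(t-1)}$ decreases in $p$ --- is correct and is what Lemma \ref{lem202} formalizes.)

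The genuine gap is in your second step, and it is not bookkeeping: the estimate you propose, $\prod_{p\le x}p/(p-1) < e^{\gamma}(\log x)(1+c/\log^2 x)$, cannot prove the theorem for any constant $c$ available from the classical literature, so ``the analysis is structurally identical, only the numerics shift'' understates what is required. The error budget for $t=21$ is $\zeta(21)-1 \approx 4.77\times 10^{-7}$ (your deficit $1-\prod_p(1-p^{-21})$, up to negligible terms), while the computational verification reaches only $N_{k_0}$ with $k_0 = 999{,}999{,}476{,}056$, i.e.\ $\log p_k \geq \log p_{k_0} \approx 31.03$. A quadratic-shaped error at $\log x = 31.03$ equals $c/963$, so you would need $c \lesssim 4\times 10^{-4}$; Rosser--Schoenfeld-type bounds have $c$ of order $1$, and even Dusart's cubic bound $0.2/\log^3 x \approx 6.7\times 10^{-6}$ overshoots the budget by more than an order of magnitude. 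With, say, $c=1/2$, your reduced inequality only begins to hold once $\log\log n \gtrsim \sqrt{c/(\zeta(21)-1)} \approx 10^{3}$, i.e.\ for $n \gtrsim \exp(\exp(1000))$, and no ``finite check'' bridges from $\exp(\exp(31.03))$ to there: the colossally abundant computations are themselves the bottleneck and represent the current limit of feasibility. What actually closes the argument is the recent cubic-error estimate
\begin{equation*}
\prod_{p\le x}\frac{p}{p-1} \le e^{\gamma}(\log x)\exp\left(\frac{0.024334}{3\log^3 x}\left(1+\frac{15}{4\log x}\right)\right),
\end{equation*}
used together with $\log\vartheta(p_k) \ge (1-0.024334/\log^3 p_{k})\log p_k$ and the tail bound $\prod_{p>p_k}(1-p^{-21})^{-1}\le \exp(21/(20p_k^{20}))$; these give a combined error of roughly $3.3\times 10^{-7} < 4.77\times 10^{-7}$ at $p_k = p_{k_0}$, with only about $30\%$ of the budget to spare --- which is precisely why the method reaches $t=21$ and no further. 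So the architecture of your proof is the right one, but the step you defer to ``numerics'' is the entire content of the theorem, and the specific estimate you wrote down would not carry it out.
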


However, the ratio $\sigma(n)/n$ cannot be too large. Robin \cite[Th\'eor\`eme 2]{robin1984} used a lower bound for Chebyshev's $\vartheta$-function
\begin{equation}
\vartheta(x) = \sum_{p \leq x} \log p \tag{1.4} \label{1.4}
\end{equation}
where $p$ runs over primes not exceeding $x$, to show that the weaker inequality
\begin{equation}
\frac{\sigma(n)}{n} < e^{\gamma} \log \log n \left( 1 + \frac{0.6483}{(\log \log n)^2} \right) \tag{1.5} \label{1.5}
\end{equation}
holds unconditionally for every integer $n \geq 3$. We note that the constant in \eqref{1.5} is an approximation to $(\sigma(12)/12 - e^{\gamma}\log \log 12)\log \log 12$, so a better constant can be achieved if we consider \eqref{1.5} for $n \geq n_0 > 12$ (see, for instance, \cite[Theorem 1.1]{axlerR}). 
The advantage of the inequality \eqref{1.5} is that it holds for every positive integer $n$ where $\log \log n$ is positive. Setting
\begin{displaymath}
\mathcal{A} = \{ 1,2,4,5,6,8,9,10,12,16,18,20,24,30,36,48,60,72,84,120,180,240,360,840,2520,5040\},
\end{displaymath}
we obtain the following improvement of \eqref{1.5}.

\begin{thm} \label{thm103}
For every positive integer $n$ satisfying $n \notin \mathcal{A}$, we have
\begin{displaymath}
\frac{\sigma(n)}{n} < e^{\gamma} \log \log n \left( 1 + \frac{0.0094243}{(\log \log n)^3} \right).
\end{displaymath}
\end{thm}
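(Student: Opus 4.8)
The plan is to combine a reduction to superabundant numbers with sharp explicit estimates for the product $\prod_{p\le x}p/(p-1)$, the latter expressed through Chebyshev's function $\vartheta$. Recall that $N$ is superabundant if $\sigma(m)/m < \sigma(N)/N$ for all $m < N$. First I would note that the right-hand side
\[
h(n) = e^{\gamma}\log\log n\Bigl(1 + \frac{0.0094243}{(\log\log n)^3}\Bigr) = e^{\gamma}\Bigl(\log\log n + \frac{0.0094243}{(\log\log n)^2}\Bigr)
\]
is increasing in $n$ once $n$ exceeds a small explicit bound, since with $u = \log\log n$ its derivative in $u$ is $e^{\gamma}(1 - 2\cdot 0.0094243/u^3) > 0$ for all relevant $u$. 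Writing $N$ for the largest superabundant number not exceeding $n$, we have $\sigma(n)/n \le \sigma(N)/N$, so it suffices to prove the inequality at superabundant $N$: once $\sigma(N)/N < h(N)$ holds, we obtain $\sigma(n)/n \le \sigma(N)/N < h(N) \le h(n)$. As every element of $\mathcal{A}$ is at most $5040$, the argument splits into an asymptotic range $n \ge n_0$, handled by this reduction and the analytic estimates below, and a finite range $n < n_0$ in which the exceptional set $\mathcal{A}$ is pinned down by direct computation.

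For a superabundant $N$ with largest prime factor $q$ and factorisation $N = \prod_{p\le q}p^{a_p}$ (with all $a_p \ge 1$), I would use the exact identity
\[
\frac{\sigma(N)}{N} = \prod_{p\le q}\frac{p}{p-1}\prod_{p\le q}\Bigl(1 - \frac{1}{p^{a_p+1}}\Bigr) < \prod_{p\le q}\frac{p}{p-1},
\]
so it is enough to bound the Mertens product on the right. Because $N \ge \prod_{p\le q}p = e^{\vartheta(q)}$, we have $\log\log N \ge \log\vartheta(q)$. The heart of the argument is then an upper estimate of the shape
\[
\prod_{p\le x}\frac{p}{p-1} \le e^{\gamma}\log\vartheta(x)\Bigl(1 + \frac{c}{(\log\vartheta(x))^3}\Bigr) \qquad (x \ge x_0),
\]
with a constant $c$ not exceeding $0.0094243$. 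Applying this with $x = q$ and then invoking $\log\vartheta(q) \le \log\log N$ together with the monotonicity of $u \mapsto e^{\gamma}(u + c/u^2)$ yields $\sigma(N)/N < h(N)$ for every superabundant $N \ge n_0$; the dropped correction factor, being strictly below $1$, only adds slack.

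The decisive and most delicate point is the product estimate above, and in particular the appearance of the exponent $3$ in place of Robin's exponent $2$ in \eqref{1.5}. The mechanism is that the dominant error in the classical approximation $\prod_{p\le x}p/(p-1) \approx e^{\gamma}\log x$ is governed by the fluctuation of $\vartheta(x)$ about $x$, namely by $(x-\vartheta(x))/x$, which is of order $1/(\log x)^2$ in the best explicit bounds. Re-expressing the approximation in terms of $\log\vartheta(x)$ rather than $\log x$ absorbs exactly this fluctuation, since $\log\vartheta(x) = \log x - (x-\vartheta(x))/x + \cdots$, leaving only a higher-order residual of order $1/(\log x)^3$. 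To make this rigorous with the explicit constant I would insert recently established explicit bounds for $\vartheta(x)$ and for $\prod_{p\le x}p/(p-1)$ (the two ingredients advertised in the abstract), track the resulting cross terms, and optimise over $x \ge x_0$. The hard part is therefore purely quantitative: balancing these error contributions so that the combined constant stays below $0.0094243$ uniformly on $x \ge x_0$, and choosing $x_0$ (equivalently $n_0$) small enough that the finitely many remaining superabundant numbers, after removing $\mathcal{A}$, can be verified by direct evaluation of $\sigma$.
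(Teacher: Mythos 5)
The decisive gap is your treatment of the finite range. With the explicit estimates currently available (and the ones the paper relies on), the product inequality you need with a constant at most $0.0094243$ only becomes true at an enormous threshold: the bound of Broadbent et al.\ on Chebyshev's function, $|\vartheta(x)-x|\le 3.3277\times 10^{-4}\,x/\log^{2}x$, is valid for $x\ge e^{31.03}\approx 3\times 10^{13}$, and the paper's constant is tuned precisely to that point (roughly $0.0094243\approx \tfrac{0.024334}{3}\bigl(1+\tfrac{15}{4\cdot 31.03}\bigr)+3.3277\times 10^{-4}$, the two summands coming from the Mertens-product error and the $\vartheta$-error). So you cannot choose $x_0$ ``small'': your asymptotic argument only starts around $n_0\approx\exp\bigl(3\times 10^{13}\bigr)$, a primorial whose largest prime is about $3\times 10^{13}$. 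Direct computation below that height is impossible, and restricting the check to superabundant numbers does not save you: whenever a superabundant $N$ itself violates the inequality (as $5040$ does, and it is precisely such violations that produce $\mathcal{A}$), your chain $\sigma(n)/n\le\sigma(N)/N<h(N)\le h(n)$ collapses for every $n$ between $N$ and the next superabundant number, so all those $n$ would again need individual verification. The paper bridges exactly this range with an ingredient your proposal lacks: Lemma \ref{lem203} (Morrill--Platt, resting on Briggs' computations with colossally abundant numbers and \cite[Proposition 1]{robin1984}), which verifies the stronger inequality \eqref{1.3} for all $5041\le n\le N_{k_0}$ with $p_{k_0}\approx 3\times 10^{13}$; only the integers $n\le 5040$ with $n\notin\mathcal{A}$ are then checked by hand. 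Without this, or an equivalent verification reaching $\exp(3\times 10^{13})$, your proof cannot be completed.

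The parts of your proposal that do work are these. The analytic mechanism is exactly the paper's: it bounds $\prod_{p\le p_k}p/(p-1)$ by $e^{\gamma}\log p_k\exp\bigl(\tfrac{0.024334}{3\log^{3}p_k}\bigl(1+\tfrac{15}{4\log p_k}\bigr)\bigr)$ and compares with $\log\vartheta(p_k)=\log\log N_k$, which is the first-order cancellation you describe. Your reduction, however, is genuinely different from and heavier than the paper's: instead of superabundant numbers, the paper takes the unique $k$ with $N_k\le n<N_{k+1}$ and uses the elementary chain $\sigma(n)/n<n/\varphi(n)\le N_k/\varphi(N_k)=\prod_{p\le p_k}p/(p-1)$ (inequalities \eqref{3.2} and \eqref{3.5}), which costs nothing and is immediately effective. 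Your route additionally requires effective Alaoglu--Erd\H{o}s structure theory: that every prime up to the greatest prime factor of a superabundant $N$ divides $N$, and an explicit bound on the largest superabundant number whose greatest prime factor is below $x_0\approx 3\times 10^{13}$, so as to know where the asymptotic range may begin. None of that is false, but it is substantial extra work replacing a two-line argument, and it still leaves the fatal finite-range problem above.
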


Let $\nu_p(n)$ denote the $p$-adic valuation of an integer $n$. In \cite[Theorem 2]{hertlein}, Hertlein showed that Robin's inequality \eqref{1.3} holds for every integer $n \geq 5041$ satisfying $\nu_2(n) \leq 19$, $\nu_3(n) \leq 12$, $\nu_5(n) \leq 7$, $\nu_7(n) \leq 6$, or $\nu_{11}(n) \leq 5$. Finally, Theorem \ref{thm103} allows us to extend Hertlein's result by establishing the following new family of integers for which Robin's inequality \eqref{1.3} hold.

\begin{thm} \label{thm104}
Robin's inequality \eqref{1.3} holds for every integer $n \geq 5041$ satisfying at least one of the following properties:
\begin{itemize}
 \item $\nu_2(n) \leq 20$,
 \item $\nu_5(n) \leq 8$,
 \item $\nu_p(n) \leq 4$, if $p$ is a prime with $11 < p \leq 19$,
 \item $\nu_p(n) \leq 3$, if $p$ is a prime with $19 < p \leq 41$,
 \item $\nu_p(n) \leq 2$, if $p$ is a prime with $41 < p \leq 139$,
 \item $\nu_p(n) = 1$, if $p$ is a prime with $139 < p \leq 1777$.
 \end{itemize}
\end{thm}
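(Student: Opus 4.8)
The plan is to argue by contradiction and to reduce the statement to a single–prime assertion. Since the listed properties are disjunctive, it suffices to prove, for each prime $p$ occurring in the list together with its associated threshold $t(p)$ (so $t(2)=20$, $t(5)=8$, $t(p)=4$ for $11<p\le 19$, and so on), the contrapositive: \emph{if} $n\ge 5041$ violates Robin's inequality \eqref{1.3}, \emph{then} $\nu_p(n)>t(p)$. So I fix such a $p$, assume $\sigma(n)/n\ge e^{\gamma}\log\log n$ with $n\ge 5041$, and suppose for contradiction that $a:=\nu_p(n)\le t(p)$. Writing $n=p^{a}m$ with $p\nmid m$ and using $\sigma(p^{a})/p^{a}=(1-p^{-a-1})/(1-p^{-1})$, a direct computation gives, for the number $n'=pn$, the exact identity
\[ \frac{\sigma(n)}{n}=\Bigl(1-\frac{p-1}{p^{a+2}-1}\Bigr)\frac{\sigma(n')}{n'}. \]

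The point of passing to $n'$ is that $n'>n\ge 5041>5040$, so $n'\notin\mathcal{A}$ and Theorem \ref{thm103} applies to it. Combining Theorem \ref{thm103} for $n'$ with the assumed failure of \eqref{1.3} at $n$ and cancelling $e^{\gamma}$ yields the key inequality
\[ \log\log n<\Bigl(1-\frac{p-1}{p^{a+2}-1}\Bigr)\log\log(pn)\Bigl(1+\frac{c}{(\log\log pn)^{3}}\Bigr), \]
where $c=0.0094243$. The prefactor here is strictly less than $1$, whereas $\log\log(pn)$ exceeds $\log\log n$ only by $\log\log(pn)-\log\log n\le \log p/\log n$, which is utterly negligible once $\log\log n$ is large. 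Isolating the competing terms and discarding the lower-order contributions, this inequality forces $(\log\log n)^{3}\le \tfrac{c(p^{a+2}-1)}{p-1}\,(1+o(1))$, that is, an explicit upper bound $\log\log n\le Y_{0}(p,a)$.

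It remains to calibrate the thresholds and to close the argument. Since $\tfrac{p^{a+2}-1}{p-1}\asymp p^{a+1}$, the bound $\log\log n\le Y_{0}(p,a)$ stays uniformly small precisely when $p^{a+1}$ does not exceed a fixed constant; the numerical value $c=0.0094243$ supplied by Theorem \ref{thm103} is exactly what keeps $Y_{0}(p,t(p))$ below the height $Y_{0}^{\ast}$ (numerically close to $31$, i.e. $n$ up to about $10^{10^{13}}$) to which \eqref{1.3} has already been verified computationally, and it is this calibration that produces the successive boundaries $5,13,19,41,139,1777$ together with the stated exponents. Since a least counterexample to \eqref{1.3} must be superabundant and \eqref{1.3} has been checked for all superabundant $n$ with $\log\log n\le Y_{0}^{\ast}$, the assumption $a\le t(p)$ forces $n\le \exp\exp(Y_{0}^{\ast})$, a range in which \eqref{1.3} holds; this contradicts the assumed failure of \eqref{1.3} at $n$ and completes the proof.

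The main obstacle is making the second paragraph fully rigorous: the error term of Theorem \ref{thm103} must be propagated through the identity for $n'$ with honest, non-asymptotic constants, and the gap $\log\log(pn)-\log\log n$, which works \emph{in favour} of a hypothetical counterexample, must be bounded from above rather than simply dropped, so that solving the key inequality returns the thresholds exactly. A secondary point is to confirm that the resulting height $\exp\exp\bigl(Y_{0}(p,t(p))\bigr)$ genuinely lies inside the verified superabundant range for every prime in each interval; this is precisely what pins down where each exponent $t(p)$ must step down.
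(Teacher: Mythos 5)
Your reduction and the identity $\frac{\sigma(n)}{n}=\bigl(1-\frac{p-1}{p^{a+2}-1}\bigr)\frac{\sigma(pn)}{pn}$ are correct, but the calibration step fails for the one case that matters most, namely $p=2$ with threshold $t(2)=20$. Applying Theorem \ref{thm103} to $n'=pn$ as a black box only gives you the relative gain $\delta=\frac{p-1}{p^{a+2}-1}\approx\bigl(1-\frac{1}{p}\bigr)p^{-(a+1)}$, which is smaller by a factor $(p-1)/p$ than the gain $p^{-(a+1)}$ obtained by exploiting the bounded valuation directly. Quantitatively, your key inequality forces only $(\log\log n)^3\le c\,\frac{p^{t+2}-1}{p-1}$ up to negligible corrections; for $p=2$, $t=20$ this is $\log\log n\le\sqrt[3]{0.0094243\,(2^{22}-1)}\approx 34.06$. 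But the computationally verified range (Lemma \ref{lem203}) covers only $\log\log n\le\log\vartheta(p_{k_0})\approx 31.03$, i.e.\ $n\le N_{k_0}\approx 10^{10^{13.1}}$, whereas your bound would require verification up to roughly $10^{10^{14.4}}$. So no contradiction is reached for $\nu_2(n)\le 20$, and your method as written recovers only Hertlein's earlier result $\nu_2(n)\le 19$ (threshold $\sqrt[3]{c(2^{21}-1)}\approx 27.0$). Your assertion that $c=0.0094243$ ``is exactly what keeps $Y_0(p,t(p))$ below'' the verified height is therefore false at $p=2$; it does hold for the other listed intervals, though only barely for $p=1777$, where the threshold is about $31.0$.

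The loss comes from comparing $n$ with $pn$: the prime $p$ still has finite valuation $a+1$ in $pn$, so Theorem \ref{thm103} applied to $pn$ cannot capture the full gain. Two repairs are possible: (i) compare with $p^{j}n$ for larger $j$, since the gain $\frac{1-p^{-(a+1)}}{1-p^{-(a+j+1)}}\to 1-p^{-(a+1)}$ while the shift $\log\log(p^{j}n)-\log\log n\le j\log p/\log n$ stays negligible; or (ii) do what the paper does and reuse the ingredients of Theorem \ref{thm103} rather than the theorem itself: by \eqref{3.1}, $\nu_2(n)\le 20$ gives $\sigma(n)/n\le(1-2^{-21})\,n/\varphi(n)$, and then \eqref{3.5} and \eqref{3.4} yield
\begin{equation*}
\frac{\sigma(n)}{n}<e^{\gamma}\left(1-\frac{1}{2^{21}}\right)\left(\log\log n+\frac{a_0}{(\log\log n)^2}\right),
\end{equation*}
which implies Robin's inequality as soon as $(\log\log n)^3\ge a_0(2^{21}-1)$, i.e.\ $\log\log n\ge 27.04$ --- comfortably inside the verified range. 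A minor further point: your detour through superabundant numbers is unnecessary, since Lemma \ref{lem203} verifies \eqref{1.3} for all integers in $[5041,N_{k_0}]$, not just for special ones.
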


\section{Proof of Theorem \ref{thm102}}

In their common paper \cite{sole2012}, Sol\'e and Planat introduced the multiplicative function $\Psi_t$ defined for any integer $t \geq 2$ by
\begin{displaymath}
\Psi_t(n) = n \prod_{p \vert n} \left( 1 + \frac{1}{p} + \cdots + \frac{1}{p^{t-1}} \right).
\end{displaymath}

\begin{rema}
In the case where $t =2$, this is just the classical Dedekind function $\Psi$.
\end{rema}

In the case where $n$ is $t$-free, they \cite[p.\:302]{sole2012} note the following result.

\begin{lem}[Sol\'e and Planat] \label{lem201}
If $n$ is $t$-free, then $\sigma(n) \leq \Psi_t(n)$.
\end{lem}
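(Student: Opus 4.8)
The plan is to exploit that both $\sigma$ and $\Psi_t$ are multiplicative, which reduces the claim to a comparison at prime powers. Since $\Psi_t(n) = n\prod_{p \vert n}(1 + 1/p + \cdots + 1/p^{t-1})$ is the product of $n$ with a factor depending only on the set of primes dividing $n$, it is multiplicative, and $\sigma$ is multiplicative classically. Writing a $t$-free integer as $n = \prod_i p_i^{a_i}$, the very definition of $t$-freeness forces $a_i \leq t-1$ for every $i$. Hence it suffices to prove the local inequality $\sigma(p^a) \leq \Psi_t(p^a)$ for every prime $p$ and every exponent $a$ with $1 \leq a \leq t-1$, and then to multiply these inequalities together over the prime factorization of $n$.

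For the local step I would simply write out both sides. On the one hand, $\sigma(p^a) = 1 + p + \cdots + p^a = \sum_{i=0}^{a} p^i$. On the other hand,
\begin{displaymath}
\Psi_t(p^a) = p^a\left(1 + \frac{1}{p} + \cdots + \frac{1}{p^{t-1}}\right) = \sum_{i=0}^{t-1} p^{a-i} = p^a + p^{a-1} + \cdots + p^{a-t+1}.
\end{displaymath}
The exponents occurring in this last sum run from $a$ down to $a - t + 1$. The hypothesis $a \leq t-1$ gives $a - t + 1 \leq 0$, so the terms indexed by $i = 0, 1, \ldots, a$ reproduce exactly the exponents $a, a-1, \ldots, 0$ of $\sigma(p^a)$, while the remaining terms (those with $i = a+1, \ldots, t-1$) contribute the nonnegative quantity $p^{-1} + \cdots + p^{a-t+1}$. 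Therefore $\Psi_t(p^a) = \sigma(p^a) + \sum_{j=a-t+1}^{-1} p^j \geq \sigma(p^a)$, with equality precisely when $a = t-1$.

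Combining the two steps, multiplicativity yields
\begin{displaymath}
\sigma(n) = \prod_i \sigma(p_i^{a_i}) \leq \prod_i \Psi_t(p_i^{a_i}) = \Psi_t(n),
\end{displaymath}
which is the assertion. I do not expect any genuine obstacle here: the argument is a direct computation once one reduces to prime powers, and the only point requiring slight care is the bookkeeping of the exponent ranges in $\Psi_t(p^a)$, together with the harmless observation that $\Psi_t$ need not be integer-valued when $t \geq 3$ (for instance $\Psi_3(p) = p + 1 + 1/p$), which nevertheless does not interfere with the inequality.
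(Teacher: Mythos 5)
Your proof is correct, and it is essentially the standard argument: the paper itself states this lemma without proof (citing Sol\'e and Planat), and their original verification is the same reduction by multiplicativity to prime powers, where for $a \leq t-1$ one compares $\sigma(p^a) = (p^{a+1}-1)/(p-1)$ with $\Psi_t(p^a) = (p^{a+1}-p^{a-t+1})/(p-1)$. Your term-by-term bookkeeping of exponents, including the equality case $a = t-1$, is sound, so there is nothing to fix.
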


We set
\begin{displaymath}
R_t(n) = \frac{\Psi_t(n)}{n \log \log n}.
\end{displaymath}
So, in order to prove Theorem \ref{thm102}, it suffices to show that the inequality 
\begin{displaymath}
R_{21}(n) < e^{\gamma}
\end{displaymath}
holds for every $n \geq 5041$. For this purpose we introduce the $k$th primorial $N_k$ as the product of the first $k$ primes,
\begin{displaymath}
N_k = \prod_{i=1}^k p_i.
\end{displaymath}
Sol\'e and Planat \cite[Proposition 1]{sole2012} proved that the primorial numbers and their multiples are exactly the champion numbers of the function $x \mapsto \Psi_t(x)/x$. Further, they proved that, in order to maximize the funktion $R_t(n)$ it is enough to consider its value at primorial integers by showing the following result.

\begin{lem}[Sol\'e and Planat] \label{lem202}
Let $k$ be an integer satisfying $k \geq 2$. For any integer $n$ with $N_k \leq n < N_{k+1}$, we have $R_t(n) \leq R_t(N_k)$.
\end{lem}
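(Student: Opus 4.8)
The plan is to reduce the claim to a statement about the prime-divisor structure of $n$, exploiting that $R_t(n)$ factors as a purely multiplicative piece divided by $\log \log n$. Writing $g(m) = \Psi_t(m)/m = \prod_{p \mid m}\left(1 + 1/p + \cdots + 1/p^{t-1}\right)$, we have $R_t(n) = g(n)/\log\log n$, and $g$ depends only on the \emph{set} of prime divisors of $n$. Since $k \geq 2$ forces $N_k \geq 6 > e$, the function $\log\log$ is positive and strictly increasing on $[N_k,\infty)$, so $n \geq N_k$ gives $\log\log n \geq \log\log N_k > 0$. Hence it suffices to prove the cleaner inequality $g(n) \leq g(N_k)$ for every $n$ with $N_k \leq n < N_{k+1}$, and then divide by $\log\log n \geq \log\log N_k$.

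The first key step is to bound the number of distinct prime factors of $n$. I would argue that $n < N_{k+1}$ forces $n$ to have at most $k$ distinct prime factors: if $n$ had $k+1$ distinct primes, then its radical $\mathrm{rad}(n)$ would be a product of at least $k+1$ distinct primes, hence $\mathrm{rad}(n) \geq N_{k+1}$; but $\mathrm{rad}(n) \mid n$ yields $\mathrm{rad}(n) \leq n < N_{k+1}$, a contradiction.

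The second key step is a monotonicity/rearrangement argument. Writing the distinct prime factors of $n$ as $q_1 < \cdots < q_j$ with $j \leq k$, note that $q_i \geq p_i$ for each $i$ (the $i$th smallest distinct prime is at least the $i$th prime). The factor $f(p) = 1 + 1/p + \cdots + 1/p^{t-1}$ is strictly decreasing in $p$ and satisfies $f(p) > 1$ for $t \geq 2$. Therefore $g(n) = \prod_{i=1}^{j} f(q_i) \leq \prod_{i=1}^{j} f(p_i) \leq \prod_{i=1}^{k} f(p_i) = g(N_k)$, where the first inequality uses $f(q_i) \leq f(p_i)$ and the second uses $f(p_i) > 1$ to fill in the missing $k-j$ factors. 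Combining with the first paragraph gives $R_t(n) \leq g(N_k)/\log\log N_k = R_t(N_k)$.

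I do not expect a serious obstacle here; the only point requiring care is the monotonicity setup — making sure $\log\log$ is positive, which is precisely why the hypothesis $k \geq 2$ appears, and confirming that $g$ genuinely depends only on the radical of $n$, so that the extremal configuration is the one using the smallest available primes.
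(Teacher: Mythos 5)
Your proof is correct. The paper itself gives no proof of this lemma---it is quoted directly from Sol\'e and Planat---but your argument (bounding the number of distinct prime factors of $n$ by $k$ via the radical, replacing those primes by the first $j \leq k$ primes using that $f(p) = 1 + 1/p + \cdots + 1/p^{t-1}$ is decreasing and exceeds $1$, and then using positivity and monotonicity of $\log\log$ on $[N_k,\infty)$ for $k \geq 2$) is exactly the standard argument underlying the cited result, so nothing further is needed.
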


A positive integer $n$ is \textit{colossally abundant} if there is an $\e > 0$ so that $\sigma(n)/n^{1+\e} \geq \sigma(k)/k^{1+\e}$ for every integer $k \geq 2$. Let $M_1$ are $M_2$ consecutive colossally abundant numbers satisfying the inequality \eqref{1.3}. Then, Robin \cite[Proposition 1]{robin1984} showed that Robin's inequality \eqref{1.3} holds for every integer $n$ such that $M_1 \leq n \leq M_2$. Briggs \cite{briggs2006} computed that Robin's inequality \eqref{1.3} holds for every colossally abundant number $n$ with $5041 \leq n \leq 10^{10^{10}}$. Hence, Robin's inequality is fulfilled for every integer $n$ so that $5041 \leq n \leq 10^{10^{10}}$. Recently, Morrill and Platt \cite{morrillplatt} extended Briggs result to every integer $n$ with $5041 \leq n \leq 10^{10^{13.11485}}$. The argument in the proof of the last expansion implies a slightly better result.

\begin{lem}[Morrill, Platt] \label{lem203}
Let $k_0 = 999,999,476,056$. Then $p_{k_0} = 29,996,208,012,611$ and Robin's inequality \eqref{1.3} holds for every integer $n$ so that
\begin{displaymath}
5041 \leq n \leq N_{k_0}.
\end{displaymath}
\end{lem}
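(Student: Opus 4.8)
The plan is to reduce the assertion to a finite verification at colossally abundant numbers and then to invoke the computation underlying the Morrill--Platt bound. First I would use Robin's interpolation principle \cite[Proposition 1]{robin1984}, recalled in the paragraph preceding the statement: if $M_1 < M_2$ are two consecutive colossally abundant numbers that both satisfy \eqref{1.3}, then every integer $n$ with $M_1 \leq n \leq M_2$ satisfies \eqref{1.3} as well. After checking \eqref{1.3} directly on an initial segment $5041 \leq n \leq c_0$, where $c_0$ denotes the first colossally abundant number fulfilling \eqref{1.3} (the colossally abundant numbers below it, up to and including $5040$, are precisely the ones violating \eqref{1.3}), chaining the interpolation principle over consecutive pairs shows that it suffices to verify \eqref{1.3} at every colossally abundant number from $c_0$ up to the first colossally abundant number that is at least $N_{k_0}$.

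Next I would make this finite check effective. A colossally abundant number is, for a suitable parameter $\e > 0$, the integer $n_\e = \prod_p p^{a_p(\e)}$ whose prime-power exponents are fixed by the Alaoglu--Erd\H{o}s threshold conditions; as $\e$ decreases through a discrete sequence of critical values, each colossally abundant number arises from its predecessor by raising the exponent of a single prime by one. Enumerating these critical values in decreasing order therefore lists the colossally abundant numbers in increasing order, and each one is stored by its exponent vector rather than by its astronomically large value. For such a vector one has
\begin{displaymath}
\frac{\sigma(n)}{n} = \prod_{p^a \, \| \, n} \frac{1 - p^{-(a+1)}}{1 - p^{-1}}, \qquad \log n = \sum_{p^a \, \| \, n} a \log p,
\end{displaymath}
from which $\log \log n$ is available, so that checking \eqref{1.3} reduces to certifying the strict inequality $\log(\sigma(n)/n) < \gamma + \log \log \log n$ under rigorous control of the rounding.

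Finally I would identify the clean primorial endpoint. In the relevant range the largest prime factor of the colossally abundant numbers is about $p_{k_0}$, and each prime of that size occurs to the first power, so the stopping point can be taken at the primorial $N_{k_0}$, whose logarithm is exactly $\log N_{k_0} = \vartheta(p_{k_0})$ with $p_{k_0} = 29{,}996{,}208{,}012{,}611$ and $\pi(p_{k_0}) = k_0 = 999{,}999{,}476{,}056$. Since $\log N_{k_0} = \vartheta(p_{k_0}) \approx 3 \cdot 10^{13}$, the number $N_{k_0}$ sits just inside the frontier $10^{10^{13.11485}}$ reached by Morrill and Platt, so the verification above is already contained in their computation; the only new point is to record the bound precisely at the primorial $N_{k_0}$, which is the form needed in the sequel. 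The main obstacle is entirely computational rather than conceptual: one must enumerate and test the colossally abundant numbers up to $N_{k_0}$ --- a range in which $n$ itself reaches size $10^{10^{13}}$ --- working throughout with exponent vectors and $\log$-scale arithmetic, and one must bound the accumulated error in the sums $\sum a_p \log p$ over roughly $10^{12}$ primes tightly enough to certify the strict inequality \eqref{1.3} at each colossally abundant number. This rigorous large-scale computation is exactly the one carried out by Briggs and then by Morrill and Platt.
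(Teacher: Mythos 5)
Your proposal is correct and takes essentially the same route as the paper: the paper also reduces the claim to checking \eqref{1.3} at colossally abundant numbers via Robin's interpolation principle (recalled in the paragraph preceding the lemma) and then defers the large-scale rigorous verification to the computations of Briggs and of Morrill and Platt, noting only that their argument yields the precise primorial endpoint $N_{k_0}$ with $p_{k_0} = 29{,}996{,}208{,}012{,}611$. Your reconstruction of the Alaoglu--Erd\H{o}s enumeration and the $\log$-scale certification is exactly the mechanism underlying that cited computation, so nothing further is needed.
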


\begin{rema}
For further details on colossally abundant numbers see \cite{erdos1944}. 
\end{rema}

We proceed to study the expression $R_t(N_k)$. For $s \in \C$ with $\text{Re}(s) > 1$ the Euler product formula for the Riemann zeta function is given by
\begin{displaymath}
\zeta(s) = \prod_{\text{$p$ prime}} \frac{1}{1-p^{-s}}.
\end{displaymath}
Using this formula together with the definition of $\Psi_t$ and the definition \eqref{1.4} of Chebyshev's $\vartheta$-function, we get the following identity for $R_t(N_k)$.

\begin{lem} \label{lem204}
For every integer $t \geq 2$ and every positive integer $k$, we have
\begin{displaymath}
R_t(N_k) = 
\frac{\prod_{p > p_k}(1-p^{-t})^{-1}}{\zeta(t) \log \vartheta(p_k)} \times \prod_{p \leq p_k} \frac{p}{p-1}.
\end{displaymath}
\end{lem}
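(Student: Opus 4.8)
The plan is to obtain the identity by a chain of elementary manipulations; no prime-counting estimates are needed, only the definitions of $\Psi_t$ and $\vartheta$ together with the Euler product for $\zeta$. The key observation is that the prime divisors of the primorial $N_k = \prod_{i=1}^k p_i$ are exactly the primes $p \leq p_k$, which lets me factor everything as products over $p \leq p_k$.

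First I would handle the denominator of $R_t(N_k) = \Psi_t(N_k)/(N_k \log \log N_k)$. Taking logarithms,
\begin{displaymath}
\log N_k = \sum_{i=1}^k \log p_i = \sum_{p \leq p_k} \log p = \vartheta(p_k),
\end{displaymath}
so $\log \log N_k = \log \vartheta(p_k)$, which already produces the factor $\log \vartheta(p_k)$ in the denominator. Next I would rewrite the numerator: by the definition of $\Psi_t$ and the fact that the primes dividing $N_k$ are precisely those $p \leq p_k$,
\begin{displaymath}
\frac{\Psi_t(N_k)}{N_k} = \prod_{p \leq p_k} \left( 1 + \frac{1}{p} + \cdots + \frac{1}{p^{t-1}} \right) = \prod_{p \leq p_k} \frac{1 - p^{-t}}{1 - p^{-1}},
\end{displaymath}
where the second equality sums each finite geometric series. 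Splitting this product gives $\prod_{p \leq p_k}(1-p^{-t}) \cdot \prod_{p \leq p_k} p/(p-1)$, the last factor being exactly the product $\prod_{p \leq p_k} p/(p-1)$ appearing in the statement.

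It then remains to convert the finite product $\prod_{p \leq p_k}(1-p^{-t})$ into the stated tail expression. Here I would invoke the Euler product $\zeta(t) = \prod_{p}(1-p^{-t})^{-1}$, valid since $t \geq 2 > 1$, to write
\begin{displaymath}
\prod_{p \leq p_k}(1-p^{-t}) = \frac{1}{\zeta(t)} \cdot \frac{1}{\prod_{p > p_k}(1-p^{-t})} = \frac{\prod_{p > p_k}(1-p^{-t})^{-1}}{\zeta(t)}.
\end{displaymath}
Substituting this and the expression for $\log \log N_k$ back into $R_t(N_k)$ assembles the three factors into the claimed formula.

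The only point that genuinely requires care — and the step I would flag as the main (minor) obstacle — is the rearrangement of the infinite Euler product into a finite head and a convergent tail in the last display. This is legitimate precisely because $t \geq 2$, which guarantees both the convergence of $\zeta(t)$ and the absolute convergence of the tail product $\prod_{p > p_k}(1-p^{-t})^{-1}$; everything else is bookkeeping.
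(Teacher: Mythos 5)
Your proposal is correct and matches the paper's intended argument exactly: the paper states this lemma as following directly from the Euler product for $\zeta$, the definition of $\Psi_t$, and the identity $\log N_k = \vartheta(p_k)$, which are precisely the three ingredients you assemble. Your careful note that $t \geq 2$ justifies splitting the absolutely convergent Euler product into head and tail is the right point to flag, and the rest of your bookkeeping (geometric series summation and factoring $\prod_{p \leq p_k}(1-p^{-1})^{-1} = \prod_{p \leq p_k} p/(p-1)$) is exactly what the paper leaves implicit.
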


In the following proof of Theorem \ref{thm102}, we do apply recently improved estimates for Chebyshev's $\vartheta$-function and the product in \eqref{1.2}, respectively, to formula of $R_t(N_k)$ given in Lemma \ref{lem204}.

\begin{proof}[Proof of Theorem \ref{thm102}]
First, let $n$ be an $21$-free integer with $n \geq 5041$ and let $k_0 = 999,999,476,056$. If $n \leq N_{k_0}$, the claim follows directly from Lemma \ref{lem203}. So we can assume that $n \geq N_{k_0}$ and we define $k$ to be the unique integer with $N_k \leq n < N_{k+1}$. If we combine Lemmata \ref{lem201} and \ref{lem202}, it turns out that
\begin{displaymath}
\frac{\sigma(n)}{n \log \log n} \leq R_{21}(N_k).
\end{displaymath}
So it suffices to check that $e^{-\gamma}R_{21}(N_k) < 1$. By \cite[Lemma 6.4]{choie2007}, we have
\begin{displaymath}
\prod_{p > p_k}(1-p^{-21})^{-1} \leq \exp (21/(20p_k^{20})). 
\end{displaymath}
We can apply this inequality to Lemma \ref{lem204} and see that
\begin{displaymath}
e^{-\gamma}R_{21}(N_k)\leq \frac{\exp (-\gamma + 21/(20p_k^{20}))}{\zeta(21) \log \vartheta(p_k)} \times \prod_{p \leq p_k} \frac{p}{p-1}.
\end{displaymath}
Since $k \geq k_0 = 999,999,476,056$, we have $p_k \geq 29,996,208,012,611$. So can use \cite[Proposition 3.4]{axlernew} to get that the inequality
\begin{equation}
e^{-\gamma}R_{21}(N_k)\leq \frac{\log p_k}{\zeta(21) \log \vartheta(p_k)} \times \exp \left( \frac{21}{20p_{k_0}^{20}} + \frac{0.024334}{3 \log^3 p_{k_0}} \left( 1 + \frac{15}{4\log p_{k_0}} \right) \right) \tag{2.1} \label{2.1}
\end{equation}
holds. To estimate the expression $\log p_k/ \log \vartheta(p_k)$, we use \cite[Proposition 1.1]{axlernew} to obtain that
\begin{displaymath}
\log \vartheta(p_k) \geq a\log p_k, 
\end{displaymath}
where $a = 1-0.024334/\log^3 p_{k_0}$. The function $f(x) = \log x/\log(ax)$ is strictly decreasing for $x \geq p_{k_0}$ and it follows that $\log p_k/\log \vartheta(p_k) \leq \log p_{k_0}/\log(ap_{k_0})$. Applying this to \eqref{2.1}, we get
\begin{displaymath}
e^{-\gamma}R_{21}(N_k)\leq \frac{\log p_{k_0}}{\zeta(21) \log(ap_{k_0}))} \times \exp \left( \frac{21}{20p_{k_0}^{20}} + \frac{0.024334}{3 \log^3 p_{k_0}} \left( 1 + \frac{15}{4\log p_{k_0}} \right) \right).
\end{displaymath}
Since the right-hand side of the last inequality is less than 1, we arrived at the end of the proof.
\end{proof}

Note that every 21-free integer $n$ fulfills $n \geq 5041$. So we can reformulate Robin's criterion (see Proposition \ref{prop101}) as follows.

\begin{kor}
The Riemann hypothesis is true if and only if Robin's inequality \eqref{1.3} holds for all 21-full integers.
\end{kor}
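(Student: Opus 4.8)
The plan is to derive this reformulation directly from Robin's criterion (Proposition \ref{prop101}) together with Theorem \ref{thm102}, treating it as a purely logical repackaging of what has already been proved. The starting observation is that the notions ``$21$-free'' and ``$21$-full'' partition the integers $n \geq 2$: by definition, $n$ is $21$-full precisely when it fails to be $21$-free. Hence every integer $n \geq 5041$ lies in exactly one of these two classes, and I would argue the two implications of the equivalence by splitting along this dichotomy.

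For the forward direction, I would assume the Riemann hypothesis. Then by Proposition \ref{prop101} Robin's inequality \eqref{1.3} holds for all $n \geq 5041$, and in particular for every $21$-full integer. Here it is worth recording the elementary size bound that makes the bookkeeping work: if $p^{21} \mid n$ for some prime $p$, then $n \geq 2^{21} = 2097152 > 5041$, so the constraint $n \geq 5041$ is automatically satisfied by every $21$-full integer and imposes no genuine restriction in this class.

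For the converse, I would suppose that \eqref{1.3} holds for all $21$-full integers and fix an arbitrary integer $n \geq 5041$. If $n$ is $21$-free, then Theorem \ref{thm102} guarantees that \eqref{1.3} holds for $n$; if instead $n$ is $21$-full, then \eqref{1.3} holds for $n$ by hypothesis. Since these two cases exhaust all possibilities, \eqref{1.3} is valid for every $n \geq 5041$, and Proposition \ref{prop101} then yields the truth of the Riemann hypothesis. Combining the two directions establishes the stated equivalence.

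I expect essentially no obstacle in this argument, since the entire analytic weight has already been carried by Theorem \ref{thm102}, which eliminates the $21$-free integers from Robin's criterion. The only point demanding a moment's attention is the interplay with the threshold $n \geq 5041$, and this is resolved by the remark above that the smallest $21$-full integer, namely $2^{21}$, already exceeds $5041$.
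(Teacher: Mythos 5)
Your proof is correct and takes essentially the same route as the paper: the corollary is exactly the combination of Proposition \ref{prop101}, Theorem \ref{thm102}, and the observation that the smallest $21$-full integer is $2^{21} = 2097152 > 5041$, so the threshold $n \geq 5041$ is vacuous on the $21$-full class. Incidentally, your size bound is precisely what the paper's prefacing remark intends; the paper's sentence ``every 21-free integer $n$ fulfills $n \geq 5041$'' is a misprint for ``21-full,'' and your version states the fact correctly.
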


\begin{rema}
One form of the Prime Number Theorem states that $\vartheta(x) = x + o(x)$ as $x \to \infty$. If we apply this and \eqref{1.2} to Lemma \ref{lem204}, we see that for every $t \geq 2$ the asymptotic formula
\begin{displaymath}
\lim_{k \to \infty} R_t(N_k) = \frac{e^{\gamma}}{\zeta(t)}
\end{displaymath}
holds (see also \cite[Proposition 3]{sole2012}). This means that for every $t \geq 2$ there is a positive integer $k_0 = k_0(t)$ so that $R_t(N_k) < e^{\gamma}$ for every $k \geq k_0$ which implies that for every $t \geq 2$ Robin’s inequality \eqref{1.3} holds for every $t$-free integer $n$ with $n \geq N_{k_0}$.
\end{rema}

\section{Proof of Theorem \ref{thm103}}

Let $\varphi$ be Euler's totient function. Since $\varphi$ is multiplicative and fulfills $\varphi(p^k) = p^k(1-1/p)$ for any prime $p$ and any positive integer $k$, we get
\begin{displaymath}
\varphi(n) = n\prod_{p \vert n}\left( 1 - \frac{1}{p} \right)
\end{displaymath}
for every positive integer $n$. Let $n = q_1^{e_1} \cdot \ldots \cdot q_k^{e_k}$, where $q_i$ are primes and $e_i \geq 1$. Note that the function $\sigma$ is multiplicative and satisfies for any prime number $p$ and any nonnegative integer $m$ the identity
\begin{displaymath}
\sigma(p^m) = 1 + p + \cdots + p^{m-1} + p^m = \frac{p^{m+1}-1}{p-1}.
\end{displaymath}
Then, it is easy to show (see, for example, \cite[Lemma 2]{grytczuk2007}) that $\sigma$ and $\varphi$ are connected by the identity
\begin{equation}
\frac{\sigma(n)}{n} = \frac{n}{\varphi(n)} \prod_{i=1}^k \left( 1 - \frac{1}{q_i^{1 + e_i}} \right), \tag{3.1} \label{3.1}
\end{equation}
which implies the inequality
\begin{equation}
\frac{\sigma(n)}{n} < \frac{n}{\varphi(n)}. \tag{3.2} \label{3.2}
\end{equation}
Now we give a proof of Theorem \ref{thm103} in which the inequality \eqref{3.2} and effective estimates for Chebyshev's $\vartheta$-function, obtained by Broadbent et al. \cite[Table 15]{kadiri}, play an important role.

\begin{proof}[Proof of Theorem \ref{thm103}]
First, let $n$ be a positive integer with $n \geq 5041$ and let $k_0$ be a positive integer given by $k_0 = \pi(29,996,161,880,813)$. If $n \leq N_{k_0}$, the claim follows directly from Lemma \ref{lem202}. So we can assume that $n \geq N_{k_0}$. We define $k$ to be the unique integer with $N_k \leq n < N_{k+1}$. Since $p_k \geq e^{31.03}$, we can use \cite[Table 15]{kadiri} to see that
\begin{equation}
\log \log N_k = \log \vartheta(p_k) > \log p_k  + \log \left( 1 - \frac{3.3277 \times 10^{-4}}{\log^2 p_k} \right). \tag{3.3} \label{3.3}
\end{equation}
Let $a_0 = 0.0094243$. Note that the function $x \mapsto x + a_0/x^2$ is a strictly increasing function on $(\sqrt[3]{2a_0}, \infty)$. If we combine this remark and the fact that $n \geq N_k$ with \eqref{3.3}, it turns out that
\begin{displaymath}
\log \log n + \frac{a_0}{(\log \log n)^2} > \log p_k \left( 1  + \frac{a_0}{\log^3 p_k} \right) + \log \left( 1 - \frac{3.3277 \times 10^{-4}}{\log^2 p_k} \right).
\end{displaymath}
The last inequality implies that
\begin{displaymath}
\log \log n + \frac{a_0}{(\log \log n)^2} > \log p_k \exp \left( \frac{0.024334}{3\log^3 p_k} \left(1+\frac{15}{4\log p_k} \right) \right).
\end{displaymath}
Now we can use \cite[Proposition 3.4]{axlernew} to get
\begin{equation}
e^{\gamma}\log \log n + \frac{e^{\gamma}a_0}{(\log \log n)^2} > \prod_{p \leq p_k} \left( 1 - \frac{1}{p} \right)^{-1} = \frac{N_k}{\varphi(N_k)}. \tag{3.4} \label{3.4}
\end{equation}
If we combine \eqref{3.2} with the fact that
\begin{equation}
\frac{N_k}{\varphi(N_k)} \geq \frac{n}{\varphi(n)},\tag{3.5} \label{3.5}
\end{equation}
we see that $N_k/\varphi(N_k) \geq \sigma(n)/n$. Applying this inequality to \eqref{3.4}, we obtain the required inequality
for every $n \geq N_{k_0}$. Finally, we check the required inequality every positive integer $n \leq 5040$ with $n \notin \mathcal{A}$.
\end{proof}

\begin{rema}
At this point we would like to point out a small error in \cite{axlerR}. If we set
\begin{displaymath}
\mathcal{B} = \{ 1,2,4,6,8,10,12,16,18,20,24,30,36,48,60,72,120,180,240,360,2520\},
\end{displaymath}
the present author \cite[Thoerem 1.1]{axlerR} claimed that the waeker inequality
\begin{equation}
\frac{\sigma(n)}{n} < e^{\gamma} \log \log n \left( 1 + \frac{0.1209}{(\log \log n)^3} \right) \tag{3.6} \label{3.6}
\end{equation}
holds for every positive integer $n$ with $n \notin \mathcal{B}$. However, a direct computation (or Theorem \ref{thm102}) shows that the inequality \eqref{3.6} does not hold for $n=5040$ either. So we need to replace $\mathcal{B}$ with $\mathcal{B} \cup \{ 5040 \}$ in the above result.
\end{rema}

\begin{rema}
Under the assumption that the Riemann hypothesis is true, Ramanujan \cite[p.\:143]{ramanujan1997} gave the asymptotic upper bound
\begin{displaymath}
\frac{\sigma(n)}{n} \leq e^{\gamma} \left( \log \log n - \frac{2(\sqrt{2}-1)}{\sqrt{\log n}} + S_1(\log n) + \frac{O(1)}{\sqrt{\log n} \log \log n} \right)
\end{displaymath}
with $S_1(x) = \sum_{\rho} x^{\rho-1}/|\rho|^2$, where $\rho$ runs over the non-trivial zeros of the Riemann $\zeta$ function. Nicolas \cite[Theorem 1.1 and Corollary 1.2]{nicolas} found some effective forms of this asymptotic result.
\end{rema}

In a different direction, Ivi\'c \cite[Theorem 1]{ivic1977} showed that the inequality
\begin{equation}
\frac{\sigma(n)}{n} < 2.59 \log \log n \tag{3.7} \label{3.7}
\end{equation}
holds for every integer $n \geq 7$.
After some improvements of the constant in \eqref{3.7}, see for example \cite{robin1984}, \cite{grytczuk2007}, and \cite{akbary2007}, the currently best such inequality was found by Hertlein \cite[Theorem 4]{hertlein}.
He proved that the inequality
\begin{displaymath}
\frac{\sigma(n)}{n} < (1+5.645 \times 10^{-7})e^{\gamma} \log \log n
\end{displaymath}
holds for every integer $n \geq 5041$. Using Theorem \ref{thm103} and Lemma \ref{lem203}, we find the following refinement.

\begin{kor} \label{kor104}
For every positive integer $n$ with $n \notin \mathcal{A} \cup \{ 3, 720\}$, we have
\begin{displaymath}
\frac{\sigma(n)}{n} < (1+3.15367 \times 10^{-7})e^{\gamma} \log \log n.
\end{displaymath}
\end{kor}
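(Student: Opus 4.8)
The plan is to split the range of $n$ according to its size and to reduce the statement, for the largest values of $n$, to a single sharp numerical inequality, while the two bounded ranges are disposed of by Lemma \ref{lem203} and by a finite computation. Throughout I would write $a_0 = 0.0094243$ for the constant of Theorem \ref{thm103}, $c = 3.15367 \times 10^{-7}$ for the target constant, and keep $k_0 = 999,999,476,056$, $p_{k_0} = 29,996,208,012,611$ from Lemma \ref{lem203}. Since every integer one must treat satisfies $n \geq 3$ (the excluded set $\mathcal{A} \cup \{3,720\}$ already contains $1$ and $2$), the quantity $\log\log n$ is positive, so in each case it suffices to compare $\sigma(n)/n$ with $(1+c)\,e^{\gamma}\log\log n$.

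The crux is the range $n > N_{k_0}$. Here $n \notin \mathcal{A}$ automatically, so Theorem \ref{thm103} applies and gives
\[
\frac{\sigma(n)}{n} < e^{\gamma}\log\log n\left(1 + \frac{a_0}{(\log\log n)^3}\right).
\]
As $\log\log n$ is increasing and $n > N_{k_0}$, one has $\log\log n > \log\log N_{k_0} = \log\vartheta(p_{k_0})$, so it is enough to verify the numerical inequality $(\log\vartheta(p_{k_0}))^3 \geq a_0/c$; then the correction term is bounded by $a_0/(\log\log N_{k_0})^3 \leq c$ and the assertion follows from the strict inequality above. To establish this I would bound $\log\vartheta(p_{k_0})$ from below exactly as in the proof of Theorem \ref{thm103}, using the effective estimate \eqref{3.3} (valid since $\log p_{k_0} = 31.032\ldots > 31.03$, i.e.\ $p_{k_0} > e^{31.03}$), namely
\[
\log\vartheta(p_{k_0}) > \log p_{k_0} + \log\left(1 - \frac{3.3277\times 10^{-4}}{\log^2 p_{k_0}}\right).
\]
With $\log p_{k_0} = 31.032092\ldots$ this yields $\log\vartheta(p_{k_0}) > 31.032091$, which exceeds the threshold $(a_0/c)^{1/3} = 31.032086\ldots$; hence $(\log\vartheta(p_{k_0}))^3 > a_0/c$, as required.

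For the two bounded ranges the argument is immediate. If $5041 \leq n \leq N_{k_0}$, then Lemma \ref{lem203} gives Robin's inequality \eqref{1.3}, i.e.\ $\sigma(n)/n < e^{\gamma}\log\log n$, which is stronger than the asserted bound because $c > 0$ and $\log\log n > 0$; moreover $n \geq 5041$ forces $n \notin \mathcal{A}$. Finally, for $n \leq 5040$ with $n \notin \mathcal{A}\cup\{3,720\}$ the inequality is checked by a direct finite computation. A convenient way to organize this step is to observe that $\mathcal{A}\cup\{3,720\}$ is exactly $\{1,2\}$ together with the complete list of integers $3 \leq n \leq 5040$ for which \eqref{1.3} fails; every remaining $n \leq 5040$ therefore already satisfies \eqref{1.3}, and a fortiori the weaker bound with factor $1+c$. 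The reason $3$ and $720$ must appear as exceptions—although they are admissible for Theorem \ref{thm103}—is that $c$ is smaller than $a_0$ by a factor of about $3\times 10^{4}$: a short computation shows that $\sigma(3)/3 = 4/3$ and $\sigma(720)/720 = 2418/720$ both exceed $(1+c)\,e^{\gamma}\log\log n$, while they do satisfy the larger-correction bound of Theorem \ref{thm103}.

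The main obstacle is the tightness of the inequality $(\log\vartheta(p_{k_0}))^3 \geq a_0/c$ in the decisive range $n > N_{k_0}$: the lower bound one can prove for $\log\vartheta(p_{k_0})$ exceeds the threshold $(a_0/c)^{1/3}$ by only about $5\times 10^{-6}$, so any estimate for Chebyshev's $\vartheta$-function cruder than the explicit bound underlying \eqref{3.3} (equivalently \cite[Table 15]{kadiri}) would fail to close the argument. In effect $c = 3.15367\times 10^{-7}$ is essentially the smallest constant for which the available effective lower bound for $\vartheta(p_{k_0})$ still suffices, which is what forces the precise numerical value in the statement.
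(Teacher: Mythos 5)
Your proof is correct and takes essentially the same route as the paper: Theorem \ref{thm103} for $n \geq N_{k_0}$, Lemma \ref{lem203} for $5041 \leq n \leq N_{k_0}$, and a finite computation below $5041$. The paper states the large-$n$ threshold as $n \geq \exp(\exp(\sqrt[3]{a_0/\e}))$ and simply asserts that $N_{k_0}$ exceeds it; your explicit verification that $\log \vartheta(p_{k_0}) > (a_0/c)^{1/3}$ via \eqref{3.3} is precisely the numerical check the paper leaves implicit.
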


\begin{proof}
Let $a_0 = 0.0094243$ and let $\e$ be a positive real number.
From Theorem \ref{thm103}, it follows directly that the inequality
\begin{equation}
\frac{\sigma(n)}{n} < (1+\e)e^{\gamma} \log \log n \tag{3.8} \label{3.8}
\end{equation}
holds for every positive integer $n \notin \mathcal{A}$ with $n \geq \exp(\exp(\sqrt[3]{a_0/\e}))$. If we set $\e = 3.15367 \times 10^{-7}$, we get that the inequality \eqref{3.8} holds for every integer $n$ with $n \geq N_{k_0}$, where $k_0 = 999,999,476,056$. For every integer $n$ with $5041 \leq n \leq N_{k_0}$, the result follows directly from Lemma \ref{lem203}. For smaller values of $n$, we verify the required inequality with a computer.
\end{proof}

\section{Proof of Theorem \ref{thm104}}

Finally we can use Lemma \ref{lem203} and Theorem \ref{thm103} to give the following proof of Theorem \ref{thm104}.

\begin{proof}[Proof of Theorem \ref{thm104}]
We only show that Robin's inequality \eqref{1.3} holds for every integer $n \geq 5041$ with $\nu_2(n) \leq 20$. The proof of the remaining cases is quite similar and we leave the details to the reader. Let $a_0 = 0.0094243$ and $k_0 = 999,999,476,056$. By Lemma \ref{lem203}, it suffices to consider the case where $n \geq N_{k_0}$. If $\nu_2(n) \leq 20$, we can combine \eqref{3.1}, \eqref{3.4}, and \eqref{3.5} to see that the inequality
\begin{displaymath}
\frac{\sigma(n)}{n} < e^{\gamma} \left( 1 - \frac{1}{2^{21}} \right) \left( \log \log n + \frac{a_0}{(\log \log n)^2} \right) \tag{4.1} \label{4.1}
\end{displaymath}
holds. Since $n \geq N_{k_0} \geq \exp(\exp(\sqrt[3]{2^{21}-1}))$, we obtain that the inequality \eqref{4.1} implies Robin's inequality \eqref{1.3} for every integer $n \geq N_{k_0}$ with $\nu_2(n) \leq 20$ and we arrive at the end of the proof.
\end{proof}


\end{document}